\newtheorem{theorem}{Theorem}[section]
\newtheorem{conjecture}[theorem]{Conjecture}
\newtheorem{corollary}[theorem]{Corollary}
\newtheorem{lemma}[theorem]{Lemma}
\newtheorem{proposition}[theorem]{Proposition}
\newtheorem{example}[theorem]{Example}
\newtheorem{remark}[theorem]{Remark}
\newtheorem{algorithm}[theorem]{Algorithm}
\newcommand{\bb}[1]{\mathbb{#1}}
\newcommand{\mc}[1]{\mathcal{#1}}
\newcommand{\mf}[1]{\mathfrak{#1}}
\begin{document}

\title[Symplectic Nilpotent Orbits]{Quantization of Special Symplectic Nilpotent Orbits and Normality of their Closures}
\author{Kayue Daniel Wong}

\begin{abstract}
We study the regular function ring $R(\mc{O})$ for all symplectic nilpotent orbits $\mc{O}$ with even column sizes. We begin by recalling the quantization model for all such orbits by Barbasch using unipotent representations. With this model, we express the multiplicities of fundamental representations appearing in $R(\mc{O})$ by a parabolically induced module. Finally, we will use this formula to give a criterion on the normality of the Zariski closure $\overline{\mc{O}}$ of $\mc{O}$.
\end{abstract}
\maketitle
\section{Introduction}
Let $G = Sp(2n,\bb{C})$ be the complex symplectic group. The $G$-conjugates of a nilpotent element $X \in \mf{g}$ form a \textbf{nilpotent orbit} $\mc{O} \subset \mf{g}$. The idea of the orbit method, first proposed by Kirillov, suggests that one can `attach' a unitary representation to each nilpotent orbit $\mc{O}$. In \cite{W 2016}, the author used tools from unipotent representations and dual pair correspondence to achieve this goal for spherical, special nilpotent orbits and their covers (see Theorem A of \cite{W 2016}). In the following work, we would like to study a larger class of nilpotent orbits.\\

It is well-known that all such nilpotent orbits are parameterized by partitions, where the partition corresponds to the size of the Jordan blocks. For $Sp(2n,\mathbb{C})$, nilpotent orbits are identified with the partitions of $2n$ in which odd parts occur with even multiplicity.\\

In fact it is sometimes more convenient to look at the column sizes, or the \textbf{dual partition}, of a given partition. More precisely, let $\psi = [r_1,r_2,\dots,r_i]$ be a partition of $n$, with $r_1 \geq r_2 \geq \dots \geq r_i > 0$, its dual partition is given by $\psi^* = (c_k,c_{k-1},\dots,c_1)$, where $c_{k+1 - j} = \#\{i | r_i \geq j \}$. We will use square bracket $[r_1, r_2, \dots]$ to denote the partition of a nilpotent orbit, and round bracket $(c_k, c_{k-1}, \dots)$ to denote the dual partition of the same orbit.\\

Given two partitions $\varsigma = [r_1,\dots,r_p]$, $\psi = [r_1',\dots,r_q']$, we define their \textbf{union} $\varsigma \cup \psi = [s_1,\dots,s_{p+q}]$, where $\{s_1 \dots, s_{p+q} \} = \{ r_1, \dots, r_p, r_1', \dots, r_q'\}$ as sets and $s_1 \geq s_2 \geq \dots \geq s_{p+q}$. Also, define the \textbf{join} to be $\varsigma \vee \psi = (\varsigma^* \cup \psi^*)^*$, so that if $\varsigma = (c_m,\dots,c_0)$, $\psi = (c_n',\dots,c_0')$, then $\varsigma \vee \psi = (d_{m+n+1},\dots,d_0)$, where $\{d_{m+n+1} \dots, d_0 \}$ $=$ $\{ c_m, \dots, c_0, c_n', \dots, c_0'\}$ as sets and $d_{m+n+1} \geq d_{m+n} \geq \dots \geq d_0$.\\

Here is a restatement of the characterization of nilpotent orbits for $Sp(2n,\bb{C})$, which is implicit in the construction of nilpotent orbit closures in \cite{KP 1982}: Any nilpotent orbit for $Sp(2n,\mathbb{C})$ can be parameterized by a partition of $2n$ with column sizes $(c_{2k}, c_{2k-1}, \dots, c_0)$, where $c_{2k} \geq c_{2k-1} \geq \dots \geq c_{0} \geq 0$ (by insisting $c_{2k}$ to be the longest column, we put $c_0 = 0$ if necessary), such that $c_{2i} + c_{2i-1}$ is even for all $i \geq 0$ (where $c_{-1} = 0$).\\

For most parts of the following work, we study the ring of regular functions $R(\mc{O})$ for $\mc{O} = (2a_{2k}, \dots, 2a_1, 2a_0)$, i.e. all columns of $\mc{O}$ are even. For example, the following is known to be true:\\

\begin{theorem}[Barbasch - \cite{B 2008} and \cite{B 2015}, p.29] \label{thm:b2008}
Let $\mc{O} = (2a_{2k}, \dots, 2a_1, 2a_0)$ be a nilpotent orbit such that $a_{2i-1} > a_{2i-2}$ for all $i$. Then as $G \cong K_{\bb{C}}$-modules, the spherical unipotent representation $X_{\mathrm{triv}}$ attached to $\mc{O}$ satisfies
$$X_{\mathrm{triv}} \cong R(\mc{O}).$$
\end{theorem}

%Following the arguments in \cite{B 2008}, we present a proof of Theorem \ref{thm:b2008} in our setting (see Section 3).
In fact, Barbasch in \cite{B 2008} proved a much more general statement than Theorem \ref{thm:b2008} for other classical Lie groups and other unipotent representations. More specifically, the other unipotent representations $X_{\pi}$ (see Equation (\ref{eq:unichar})) corresponds to the global sections of some $G$-equivariant vector bundle $G {\times}_{G_e} V_{\pi}$ of $\mc{O}$. This essentially verifies a conjecture of Vogan (Conjecture 12.1 of \cite{V2}) for such orbits. More details are given in Remarks \ref{rmk:quantization}.\\

With Theorem \ref{thm:b2008}, we can essentially compute the multiplicity of any irreducible representations appearing in $R(\mc{O})$. In particular, we focus on the fundamental representations of $G = Sp(2n,\bb{C})$, given by
$$\mu_i := \wedge^{i}\bb{C}^{2n}/\wedge^{i-2}\bb{C}^{2n}$$
for $i = 1, 2, \dots, n$ (if $i-2 < 0$, take $\wedge^{i-2}\bb{C}^{2n} = \mathrm{triv}$). We have the following formula for the multiplicities of fundamental representations for a larger class of nilpotent orbits than in Theorem \ref{thm:b2008}:\\

\noindent \textbf{Theorem A.}\ \textit{
Let $\mc{O} = (2a_{2k}, \dots, 2a_1, 2a_0)$ be a nilpotent orbit for $G$. Remove all column pairs of same size $(\alpha_i, \alpha_i)$ in $\mc{O}$, leaving the orbit $(d_{2l},d_{2l-1},$ $\dots,d_0)$, i.e.
$$\mc{O} = (d_{2l},d_{2l-1},\dots,d_0) \vee (\alpha_1, \alpha_1, \dots, \alpha_x, \alpha_x)$$
with $d_{i+1} \neq d_i$ for all $i$. Then the multiplicities of the fundamental representations $\mu_i$ are given by
$$[R(\mc{O}):\mu_i] = [Ind_{GL(D)}^{Sp(2n,\bb{C})}(\mathrm{triv}):\mu_i],$$
where $GL(D) = \Pi_{i=0}^l GL(\frac{d_{2i}+d_{2i-1}}{2}) \times \Pi_{i=1}^x GL(\alpha_i)$.}\\

For example, let $\mc{O} = (8,4,4,4,2,2,2) = (8,4,2) \vee (4,4,2,2)$. So
$$[R(\mc{O}) : \mu_i] = [Ind_{GL(6) \times GL(1) \times GL(4) \times GL(2)}^{Sp(26)}(\mathrm{triv}):\mu_i]$$
for all $i = 0, \dots, 13$. More examples can be found in Example \ref{eg:7531}.\\

The second main theorem concerns about the Zariski closure $\overline{\mc{O}}$ of $\mc{O}$. In particular, we are interested in the normality of $\overline{\mc{O}}$. It is known in \cite{KP 1979} that in the case of $SL(n,\bb{C})$, all nilpotent orbit closures are normal. For $Sp(2n,\bb{C})$, Kraft and Procesi proved the following:
\begin{theorem}[Kraft-Procesi - \cite{KP 1982}] \label{thm:KP}
Let $\mc{O} = (c_{2k}, c_{2k-1}, \dots, c_0)$ be a nilpotent orbit for $G = Sp(2n,\mathbb{C})$. If there is a chain of column lengths of the form
$$c_{2i} \neq c_{2i-1} = c_{2i-2} = \dots = c_{2j - 1} = c_{2j-2} \neq c_{2j-3}, $$
then $\overline{\mc{O}}$ is not normal.
\end{theorem}
For instance, for $G = Sp(2n,\bb{C})$, the orbit closures $\overline{(8,6,6,6)}$, $\overline{(6,6,6,6)}$ are normal, while $\overline{(8,6,6,4)}$ is not normal. On the other hand, there is an algebro-geometric criterion of normality. Namely, Proposition 8.2 of \cite{AO 2004} says
$$R(\overline{\mc{O}}) \cong R(\mc{O}) \text{  if and only if  }\overline{\mc{O}}\text{  is normal.}$$
Our second Theorem gives a more refined criterion on normality of $\overline{\mc{O}}$:\\

\noindent \textbf{Theorem B.} \textit{
Let $\mc{O} = (2a_{2k}, \dots, 2a_1, 2a_0)$ be a nilpotent orbit for $Sp(2n,\bb{C})$. Then $\overline{\mc{O}}$ is not normal iff there exists a fundamental representation $\mu_i$ such that}
$$[R(\overline{\mc{O}}) : \mu_i] < [R(\mc{O}):\mu_i].$$

\section{Fundamental Group and Lusztig's quotient of Nilpotent Orbits}
In this Section, we focus on the structure of nilpotent orbits for $G = Sp(2n,\bb{C})$. More precisely, we compute the $G$-\textit{equivariant fundamental group} $A(\mc{O})$ and the \textit{Lusztig's quotient} $\overline{A}(\mc{O})$ of a nilpotent orbit for $G$. All the materials presented in this Section can be found in \cite{CM}, \cite{S1}, \cite{S2}.\\

Let $\mc{O}$ be a nilpotent orbit for $G$, and $e \in \mc{O}$. Then the stabilizer group $G_e$ are the elements in $G$ keeping $e$ fixed, i.e. $G_e = \{g \in G | g\cdot e = e \}$. Following \cite{CM}, define the $G$-\textbf{equivariant fundamental group} of $\mc{O}$ as
$$A(\mc{O}) := G_e/(G_e)^0,$$
where $H^0$ is the identity component of a group $H$. The calculation of $A(\mc{O})$ for $G = Sp(2n,\bb{C})$ can be tracked easily from \cite[Chapter 5]{CM} or \cite{S1} as follows:
\begin{proposition} \label{prop:AO} \mbox{}\\
Let $G = Sp(2n,\bb{C})$ and $\mc{O}$ is a nilpotent orbit for $G$. Then $A(\mc{O}) = (\bb{Z}/2\bb{Z})^b$, where $b$ is the number of distinct even elements in the partition of $\mc{O}$.\\
\end{proposition}
To cater for our forthcoming calculations, it is desirable to express $A(\mc{O})$ using \textit{dual} partitions:
\begin{proposition} \label{prop:AOcol} \mbox{}\\
Let $\mc{O} = (c_{2k},\dots,c_1,c_0)$ be a nilpotent orbit in $Sp(2n,\bb{C})$. Remove all column pairs $c_{2i-1} = c_{2i-2}$, and let $(d_{2j},\dots,d_2,d_1,d_0)$ be the remaining columns, i.e.
$$\mc{O} = (c_{2k},\dots,c_1,c_0) = (d_{2j},\dots,d_2,d_1,d_0) \vee (\nu_1,\nu_1, \dots, \nu_y, \nu_y),$$
with $d_{2i-1} \neq d_{2i-2}$ for all $i$. Then $A(\mc{O}) \cong (\bb{Z}/2\bb{Z})^j$ is generated by $\{s_{2j-1},s_{2j-3},\dots,s_{3},s_1\}$.\\
\end{proposition}
For example, $(8,6,6,4,4,2,2)$ has trivial fundamental group, while $(8,8,6,6,4,4,2,2)$ has fundamental group isomorphic to $(\bb{Z}/2\bb{Z})^4$.
\begin{proof}
Let
$$\mc{O} = \mc{O}^* \vee (\nu_1,\nu_1, \dots, \nu_y, \nu_y),$$
with $\mc{O}^* = (d_{2j},\dots,d_2,d_1,d_0)$ as in the Proposition. Then $A(\mc{O})$ and $A(\mc{O}^*)$ have the same size - Indeed, by removing $(c_{2i-1},c_{2i-2}) = (\nu,\nu)$ from $\mc{O}$, the partition description of $\mc{O}$ changes in the form:
$$[r_1 \geq \dots \geq r_{\nu-1} \geq r_{\nu} > r_{\nu+1} \geq \dots] \to [r_1-2 \geq \dots \geq r_{\nu-1}-2 \geq r_{\nu}-2 \geq r_{\nu+1} \geq \dots]$$
with $r_{\nu} > r_{\nu+1}+2$, or $r_{\nu} = r_{\nu+1}+2$ with $r_{\nu} \equiv r_{\nu+1} \equiv 1(\mathrm{mod }2)$. In both cases, the number of distinct even numbers before and after removal are equal. Hence Proposition \ref{prop:AO} says the $G$-equivariant fundamental groups are the same.\\

We now focus on finding $A(\mc{O}^*)$. Suppose the partition description of $\mc{O}^*$ is $[r_1^* \geq r_2^* \geq \dots \geq r_{d_{2j}}^* > 0]$. Then $r_i^*$ must satisfy the following:\\
$\bullet$ If one of $r_i^*$ and $r_{i+1}^*$ is odd, $r_i^* - r_{i+1}^* =0$ or $1$;\\
$\bullet$ If both $r_i^*$ and $r_{i+1}^*$ are even, $r_i^* - r_{i+1}^* = 0$ or $2$; and\\
$\bullet$ $r_{d_{2j}}^* \leq 2$.\\
Therefore the number of distinct even elements in $\mc{O}^*$ is equal to the number of even elements in $\{1,2,\dots,r_1^*\}$, which is $\lceil \frac{r_1^*}{2} \rceil = j$. Consequently $A(\mc{O}) \cong A(\mc{O}^*) = (\bb{Z}/2\bb{Z})^j$.
\end{proof}

There is a quotient group $\overline{A}(\mc{O})$ of $A(\mc{O})$, first introduced by Lusztig, that plays a vital role in the theory of unipotent representations. We will follow \cite{S2} to determine $\overline{A}(\mc{O})$ under our dual partition notation.

\begin{proposition} \label{prop:barAOcol}
Let $\mc{O} = (c_{2k},\dots,c_1,c_0)$ be a nilpotent orbit for $Sp(2n,\bb{C})$. Remove all column pairs $(c_{2i-1}, c_{2i-2}) = (\nu, \nu)$, along with all \textbf{odd} columns $c_{j} = 2\mu + 1$. Let $(d_{2p}',\dots,d_2',d_1',d_0')$ be the remaining columns, i.e.
$$\mc{O} =  (c_{2n},\dots,c_1,c_0) = (d_{2p}',\dots,d_1',d_0') \vee (2\mu_1+1,\dots,2\mu_x+1) \vee (\nu_1,\nu_1,\dots,\nu_y,\nu_y),$$
then $\overline{A}(\mc{O}) \cong (\bb{Z}/2\bb{Z})^p$ with generators $\{s_{2p-1}',s_{2p-3}'\dots,s_1'\}$.
\end{proposition}

\begin{remark} \mbox{}\\ \emph{
(1)\ Note that the above construction automatically makes $\overline{A}(\mc{O})$ into a quotient of $A(\mc{O})$. Indeed, $\{d_{2p}', \dots, d_1', d_0'\}$ is obtained by removing the odd columns in $\{d_{2j}, \dots, d_1, d_0\}$, so the Lusztig's quotient map
$$p: A(\mc{O}) \to \overline{A}(\mc{O})$$
has kernel $\ker p = \{s_{2l - 1}\ |\ d_{2l-1} \text{ is odd } \}$.}\\

\noindent \emph{(2)\ We will focus on the case when $\mc{O} = (2a_{2k}, \dots, 2a_1, 2a_0)$ in the following sections. In this case, $A(\mc{O})$ is always equal to $\overline{A}(\mc{O})$.}
\end{remark}

\begin{proof}
There is a description of $\overline{A}(\mc{O})$ on \cite[Section 5]{S2} which we will use here. Write $\mc{O} = [r_1 \geq r_2 \geq \dots ]$ in its partition description. Let $S_{odd}$ be the collection of even $r_i$ appearing odd number of times in $\mc{O}$ and similarly for $S_{even}$. Write
\begin{align*}
S_{odd} &= \{ 2\alpha_{2m} > 2\alpha_{2m-1} > \dots > 2\alpha_1 \}\ \text{,putting }\alpha_1 = 0\text{ if necessary};\\
S_{even} &= \{ 2\beta_l > 2\beta_{l-1} > \dots > 2\beta_1 \},
\end{align*}
then $\overline{A}(\mc{O})$ is generated by $x_i$, with $i$ equal to $2\alpha_{2r-1}$ and all $2\beta_s$ lying between $2\alpha_{2r+1} \geq 2\beta_s \geq 2\alpha_{2r}$ for all $r$.\\
\indent It is obvious that the above description of $\overline{A}(\mc{O})$ only depends on the ordering between the even $r_i$'s appearing in $\mc{O}$, but not on the value of $r_i$. So we can reduce our study to $\mc{O}^* = [r_1^* \geq r_2^* \geq \dots]$ as in Proposition \ref{prop:AOcol}. Also, the odd $r_i^*$ does not show up in the calculation of $\overline{A}(\mc{O}^*)$, so we can remove all odd $r_i^*$'s in $\mc{O}^*$ and get
$$\mc{O}^* = (d_{2k},d_{2k-1},\dots, d_2, d_1, d_0) \to \mc{O}^{**} =([d_{2k-1} - \sum_{j=1}^{k-1}(d_{2j} - d_{2j-1})]^2,\dots, [d_3 - (d_2 - d_1)]^2, d_1^2, 0),$$
where all $d_{2i-1} - \sum_{j=1}^{i-1}(d_{2j} - d_{2j-1})$, $1 \leq i \leq k$ are distinct integers with the property $\overline{A}(\mc{O}) = \overline{A}(\mc{O}^*) = \overline{A}(\mc{O}^{**})$. Since $(d_{2j} - d_{2j-1})$ is even for all $j$, the $i$-th columns of $\mc{O}^*$ and $\mc{O}^{**}$ are of the same parity. So it suffices to prove the Proposition for $\mc{O}^{**}$.\\

Consider all orbits with partition and dual partition of the form
$$\mc{Q} = [(2k)^{m_k} > (2k-2)^{m_{k-1}} > \dots > 2^{m_1}] =(v_{2k-1}^2,\dots,v_{2i-1}^2,\dots, v_1^2, 0)$$
with all $v_{2i-1}$ distinct (so $\mc{Q} = \mc{O}^{**}$ is an example). Note that
\begin{equation} \label{eq:barao}
m_k = v_1\ \text{ and } m_{k-j} = v_{2j+1} - v_{2j-1} \text{ for all } j > 0.
\end{equation}
\textbf{Claim:} $\overline{A}(\mc{Q}) = (\bb{Z}/2\bb{Z})^p$, where $p$ is the number of even $v_{2i-1}$'s in $\overline{A}(\mc{Q})$.\\
The proof of the Claim is combinatorial. We give a sketch proof here. Suppose $v_1, v_3, \dots, v_{2x-1}$ are all even and $v_{2x+1}$ is odd. Then Equation (\ref{eq:barao}) says $m_k, \dots, m_{k-x+1}$ are all even and $m_{k-x}$ is odd, i.e. $2k, \dots, (2k-2x+2) \in S_{even}$ and $(2k-2x) = 2\alpha_{2m} \in S_{odd}$. According to the formulation of $\overline{A}(\mc{O})$ in the beginning of the proof,
$$(2k), (2k-2), \dots, (2k-2x+2) > 2\alpha_{2m}$$
and they contribute to $\overline{A}(\mc{Q})$, while $(2k-2x) = 2\alpha_{2m}$ does not contribute to $\overline{A}(\mc{Q})$. So the even columns contribute while the odd column does not contribute to $\overline{A}(\mc{Q})$. This matches with our Claim.\\
Now suppose  $v_{2x+1}, \dots, v_{2y-1}$ are all odd and $v_{2y+1}$ is even. Then Equation (\ref{eq:barao}) says that $(2k-2x-2), \dots, (2k-2y+2) \in S_{even}$ and $(2k-2y )= 2\alpha_{2m-1} \in S_{odd}$. According to the formulation in the beginning of the proof,
$$2\alpha_{2m} > (2k-2x-2), \dots, (2k-2y+2) > 2\alpha_{2m-1}$$
and they do not contribute to $\overline{A}(\mc{Q})$, while $(2k-2y) = 2\alpha_{2m-1}$ contributes to $\overline{A}(\mc{Q})$. So the odd columns do not contribute while the even column contributes to $\overline{A}(\mc{Q})$. This also matches with our Claim.\\

One can continue the argument using induction to prove the Claim holds for all such $\mc{Q}$. In other words, $\overline{A}(\mc{O}^{**})$ is generated by its even column pairs, so the Proposition holds for $\mc{O}^{**}$ and hence for $\mc{O}$ as well.
\end{proof}

\section{special unipotent representations}
Recall the construction of special unipotent representations attached to a special classical nilpotent orbit $\mc{O}$ with $^L\mc{O}$ is an even orbit in \cite{BV 1985} (or \cite{W 2016}).
\begin{algorithm} \label{alg:unipotent}
Let $\mc{O} = (2a_{2k},\dots,2a_1,2a_0)$ with $a_{2i-1} > a_{2i-2}$ for all $i$:\\
(I) The Spaltenstein dual is given by $^L\mc{O} = \bigcup_{i=1}^{k} [2a_{2i}+1, 2a_{2i-1} - 1] \cup [2a_0 + 1]$. Hence
$$\lambda_{\mc{O}} = \frac{1}{2}\ ^Lh = (\lambda_1 ; \dots ; \lambda_k ; a_0, \dots, 2, 1),$$
where $\lambda_i = (a_{2i}, \dots, 2, 1 ;  a_{2i-1} - 1, \dots, 1, 0)$ for each $i$.\\

\noindent (II) Let $\gamma(\mc{O}) := \{\mc{O}' \subseteq \overline{\mc{O}} | \mc{O}' \nsubseteq \overline{\mc{O}}_{spec}\ \text{for any other special orbit } \mc{O}_{spec} \subsetneq \mc{O} \}$. Lusztig in \cite{Lu2} defined an injection $\gamma(\mc{O}) \hookrightarrow \overline{A}(\mc{O})$, such that the composition of maps
$$\sigma: \gamma(\mc{O}) \hookrightarrow \overline{A}(\mc{O}) \stackrel{\text{left cell}} \longrightarrow \hat{W}$$
maps $\mc{O}'$ to its Springer representation. In our case, $\gamma(\mc{O})$ is given by
$$\gamma(\mc{O}) = \bigcup_{I \subset \{s_1,s_3, \dots, s_{2k-1} \}}\{ \mc{O}_{I} \},$$
with
$$\mc{O}_{I} = \bigvee_{s_{2j-1} \notin I} (2a_{2j}, 2a_{2j-1}) \vee \bigvee_{s_{2i-1} \in I} (2a_{2i}+1, 2a_{2i-1}-1) \vee (d_0).$$
By Proposition \ref{prop:barAOcol}, $\overline{A}(\mc{O})$ has $2^k$ elements, which has the same cardinality as $\gamma(\mc{O})$. So the injection $\gamma(\mc{O}) \hookrightarrow \overline{A}(\mc{O})$ is indeed a bijection.\\

From now on, we denote elements in $\overline{A}(\mc{O})$ by the subset $I \subset \{s_1, s_3, \dots, s_{2k-1} \}$. According to the algorithm of computing Springer representations given in \cite[Section 7]{S1}, $\sigma(\mc{O}_I) = j_{W_I}^{W} (sgn)$ where
$$W_I = \prod_{s_{2j-1} \notin I} (C_{a_{2j}} \times D_{a_{2j-1}}) \times \prod_{s_{2i-1} \in I} (D_{a_{2i}+1} \times C_{a_{2i-1}-1}) \times C_{a_0}.$$

\noindent (III) Let $\lambda_j = (a_{2j}, \dots, 2, 1 ;  a_{2j-1} - 1, \dots, 1, 0)$ as before, and $\lambda_i = (a_{2i}, \dots, 1,0 ;$ $a_{2i-1}-1, \dots, 2, 1)$. Define

$$R_I = \sum_{w \in W_I}(-1)^{l(w)} X\left(
\begin{array}{cccccccccc}
& \cup_{s_{2j-1} \notin I} \lambda_j; & \cup_{s_{2i-1} \in I} \lambda_i ; & a_0 \dots 2,1 \\
w(& \cup_{s_{2j-1} \notin I} \lambda_j; & \cup_{s_{2i-1} \in I} \lambda_i ; & a_0 \dots 2,1)\end{array}
\right),$$
where
$X \left(
\begin{array}{cccccccccc}
\lambda_1 \\
\lambda_2 \\
\end{array}
\right) = K-\text{finite part of }Ind_B^G(e^{(\lambda_1,\lambda_2)} \otimes 1)$
is the \textbf{principal series representation} with character $(\lambda_1,\lambda_2) \in \mf{h}_{\bb{C}}$, the complexification of the maximal torus $\mf{h}$ in $\mf{g}$ (here we treat $G$ as a real Lie group). In particular, the $G \cong K_{\bb{C}}$-types of $X \left(
\begin{array}{cccccccccc}
\lambda_1 \\
\lambda_2 \\
\end{array}
\right)$ is equal to $Ind_T^G(e^{\lambda_1-\lambda_2})$, which we will denote as $Ind_T^G(\lambda_1-\lambda_2)$ subsequently (see Theorem 1.8 of \cite{BV 1985} for more details on the principal series representations).\\

\noindent (IV) The special unipotent representations are parameterized by $\pi \in \overline{A}(\mc{O})^{\wedge}$. In fact, for any $I \subset \{s_1, \dots, s_{2k-1}\}$, there exists an irreducible $\overline{A}(\mc{O})$-representation $\pi_I$ with
$$
    \pi_I(s_{2j-1})=
\begin{cases}
    -1 ,& \text{if } s_{2j-1} \in I\\
    1,              & \text{otherwise}.
\end{cases}
$$
All $\pi \in \overline{A}(\mc{O})^{\wedge}$ can be obtained in this way, i.e. $\pi = \pi_I$ for some $I$. Then the special unipotent representations are of the form:
\begin{equation} \label{eq:unichar}
X_{\pi_I} = \frac{1}{2^k}\sum_{J \subset \{s_1, s_3, \dots s_{2k-1} \}} \mathrm{tr}_{\pi_{I}}(J)R_J,
\end{equation}
where $\pi_I(J) = \Pi_{s_{2j-1} \in J}\pi_I(s_{2j-1})$. In particular,
\begin{equation} \label{eq:unitriv}
X_{\mathrm{triv}} = X_{\pi_{\phi}} = \frac{1}{2^k}\sum_{J \subset \{s_1, s_3, \dots s_{2k-1} \}}R_J
\end{equation}
and the sum of all special unipotent representations is given by
\begin{equation} \label{eq:unisum}
R_{\phi} = \bigoplus_{\pi \in \overline{A}(\mc{O})^{\wedge}} X_{\pi}
\end{equation}
\end{algorithm}

\begin{example} \label{eg:8642} \emph{
Let $\mc{O}' = [4,4,3,3,2,2,1,1] = (8,6,4,2)$. Then $A(\mc{O}) =$ $\overline{A}(\mc{O}) =$ $\{\phi,$ $\{s_1\},$ $\{s_3\},$ $\{s_1,s_3\}\}$, and above calculation gives
$$\lambda_{\mc{O}} = (4321; 210; 21; 0),$$
with
\begin{align*}
R_{\phi} &= \sum_{w \in C_4 \times D_3 \times C_2 \times D_1}(-1)^{l(w)} X\left(
\begin{array}{cccccccccc}
&4321; & 210; & 21; & 0 \\
w(&4321; & 210; & 21; & 0)\end{array}
\right); \\
R_{s_1} &= \sum_{w \in C_4 \times D_3 \times D_3 }(-1)^{l(w)} X\left(
\begin{array}{cccccccccc}
&4321; & 210; & 210 \\
w(&4321; & 210; & 210)\end{array}
\right); \\
R_{s_3} &= \sum_{w \in D_5 \times C_2 \times C_2 \times D_1}(-1)^{l(w)} X\left(
\begin{array}{cccccccccc}
&43210; & 21; & 21; & 0 \\
w(&43210; & 21; & 21; & 0)\end{array}
\right); \\
R_{s_1,s_3} &= \sum_{w \in D_5 \times C_2 \times D_3}(-1)^{l(w)} X\left(
\begin{array}{cccccccccc}
&43210; & 21; & 210 \\
w(&43210; & 21; & 210)\end{array}
\right).
\end{align*}
and $X_{\mathrm{triv}} = \frac{1}{4}(R_{\phi} + R_{s_1} + R_{s_3} + R_{s_1, s_3})$.}
\end{example}

\begin{remark} \label{rmk:quantization} \emph{
According to Theorem \ref{thm:b2008}, we have
$$X_{\mathrm{triv}} \cong R(\mc{O}) \cong Ind_{G_e}^G(\mathrm{triv}).$$
As a generalization of Theorem \ref{thm:b2008}, by the last paragraph of \cite{B 2008}, or more explicitly, p.29 of \cite{B 2015}, it can be seen that as $K_{\bb{C}} \cong G$-modules,
$$X_{\pi} \cong Ind_{G_e}^G(\pi)$$
for all local systems $\pi \in A(\mc{O})^{\wedge} = (G_e/(G_e)^0)^{\wedge}$. In other words, we have attached a unitary representation $X_{\pi}$ (by \cite{B 1989}) to each \textbf{orbit data} $(\mc{O}, \pi)$ for all $\mc{O}$ satisfying the hypothesis of Theorem \ref{thm:b2008} (for more details on orbit data, see \cite{V2} or Section 2 of \cite{AHV}).}\\

\emph{More generally, we can further extend our scheme to a larger class of nilpotent orbits - Consider nilpotent orbits $\mc{O} = (2a_{2k}, \dots, 2a_1, 2a_0)$ with no restrictions on the size of columns. Separate the columns $2a_{2i-1} = 2a_{2i-2}$ from $\mc{O}$, i.e.
$$\mc{O} = \mc{O}' \vee (\nu_1, \nu_1, \dots, \nu_y, \nu_y),$$
where $\mc{O}'$ satisfies the hypothesis of Theorem \ref{thm:b2008}. Then $\mc{O} = Ind_{\mf{g}' \times \mf{gl}(\nu_1) \times \dots \times \mf{gl}(\nu_y)}^{\mf{g}}(\mc{O}' \oplus \mathrm{triv} \oplus \dots \oplus \mathrm{triv})$. By Proposition \ref{prop:AOcol}, $A(\mc{O}) = A(\mc{O}')$ and hence there is a natural one-to-one correspondence between $\Pi \in A(\mc{O})^{\wedge}$ and $\pi \in A(\mc{O}')^{\wedge}$. By Corollary 1.3 of \cite{LZ},
$$ Ind_{G' \times GL(\nu_1) \times \dots \times GL(\nu_y)}^G( Ind_{G_e'}^{G'}(\pi) \boxtimes \mathrm{triv} \boxtimes \dots \boxtimes \mathrm{triv}) = Ind_{G_e}^G(\Pi).$$
Therefore, the unitarily induced module $Ind_{G' \times GL(\nu_1) \times \dots \times GL(\nu_y)}^G( X_{\pi} \boxtimes \mathrm{triv} \boxtimes \dots \boxtimes \mathrm{triv})$ is the corresponding unitary representation attached to the orbit data $(\mc{O}, \Pi)$.}
\end{remark}

Using the formula of $X_{\pi}$ and the techniques in Proposition 4.2-4.3 of \cite{W 2016}, we can compute the \textbf{Lusztig-Vogan bijection} $\gamma(\mc{O},\pi)$ (Section 1 of \cite{W 2016}) for all local systems $\pi$ of all $\mc{O}$'s discussed in Remark \ref{rmk:quantization}. For example, let $\mc{O} = (8,6,4,2)$, then
\begin{align*}
\gamma(\mc{O},\pi_{\phi}) &= (8,6,4,4,4,2,2,2,0,0) \sim (8,6,4,4,2,2,0;4,2,0)\\
\gamma(\mc{O},\pi_{s_1}) &= (8,6,4,4,4,2,2,1,1,0) \sim (8,6,4,4,2,2,0;4,1,1)\\
\gamma(\mc{O},\pi_{s_3}) &= (8,5,5,4,3,3,2,1,1,0) \sim (8,5,5,3,3,1,1;4,2,0)\\
\gamma(\mc{O},\pi_{s_1, s_3}) &= (8,5,5,4,3,3,1,1,1,1) \sim (8,5,5,3,3,1,1;4,1,1)
\end{align*}
Moreover, Theorem 5.1 of \cite{W 2016} can be readily verified as well.

\section{Fundamental Multiplicities}
By Theorem \ref{thm:b2008} and the character formula of $X_{\mathrm{triv}}$ given in Algorithm \ref{alg:unipotent}, one can practically compute the multiplicities of any irreducible $G$-representations appearing in $R(\mc{O})$. In this Section, we focus on the fundamental multiplicities $\mu_i = V_{1^i 0^{n-i}}$ defined in the Introduction (the subscript $\beta$ of $V_{\beta}$ denotes the highest weight of an irreducible representation of $G$).
\begin{lemma} \label{lem:uab}
For $G = Sp(2n,\bb{C})$ and $2a \geq 2b > 0$ both even, define the virtual $G$-modules
\begin{align*}
U_{a,b} &=  \frac{1}{2}[\sum_{w \in C_{a} \times D_{b}}(-1)^{l(w)} Ind_T^G( (a, \dots, 2,1; b-1, \dots, 1,0) - w(a, \dots, 2,1; b-1, \dots, 1,0))\\
 &\ \ + \sum_{w \in D_{a+1} \times C_{b-1}}(-1)^{l(w)} Ind_T^G( (a, \dots, 1,0 ; b-1, \dots, 2,1) - w(a, \dots, 1,0 ; b-1, \dots, 2,1))];\\
U_{a} &=  \sum_{w \in C_{a}}(-1)^{l(w)} Ind_T^G(a, \dots, 2,1 - w(a, \dots, 2,1)),
\end{align*}
then $[U_{a,b}: \mu_i] = [Ind_{GL(a+b)}^G(\mathrm{triv}): \mu_i] = \delta_{i0},\ [U_{a}: \mu_i] = [Ind_{GL(a)}^G(\mathrm{triv}): \mu_i] = \delta_{i0}$ for all $i$ (where $\delta_{ij}$ is the Kronecker delta function).
\end{lemma}

\begin{proof}
Note that $U_{a,b}$ and $U_a$ are character formulas of the special unipotent representation $X_{\mathrm{triv}}$, for nilpotent orbits $\mc{O} = (2^{2b}1^{2a-2b})$ and $\mathrm{triv} = (1^{2n})$ respectively. It is proved in Section 2 of \cite{W 2016} that for all such $\mc{O}$, all $G$-representations of $R(\mc{O})$ are of the form $V_{2p_1, 2p_2, \dots, 2p_b, 0,\dots,0}$ with multiplicity one. In particular, no $\mu_i = V_{1^i 0^{n-i}}$ appears in $R(\mc{O})$ for $i > 0$. Hence $[U_{a,b}: \mu_i] = [U_{a}: \mu_i] = 0$ for all $i > 0$.\\

On the other hand, one can use Frobenius reciprocity to conclude that $[Ind_{GL(a+b)}^G(\mathrm{triv}): \mu_i]= [Ind_{GL(a)}^G(\mathrm{triv}): \mu_i] = 0$ for all $i > 0$. Hence the result follows.
\end{proof}

\begin{lemma} \label{lem:decompose}
Let $G = Sp(2p+2q,\bb{C})$, with $G_1 = Sp(2p,\bb{C})$ and $G_2 = Sp(2q,\bb{C})$ be subgroups of $G$ such that $G_1 \times G_2$ embeds into $G$ diagonally. Write $T_1$, $T_2$ as Cartan subgroups of $G_1$ and $G_2$ respectively (so that $T := T_1 \times T_2$ is a Cartan subgroup of $G$, and let
$$C_1 = \sum_i a_i Ind_{T_1}^{G_1}(\gamma_i),\ \ C_2 = \sum_j b_j Ind_{T_2}^{G_2}(\delta_j)$$
be virtual representations of $G_1$ and $G_2$ respectively. Then as virtual $G$-modules,
$$\sum_{i,j}a_ib_jInd_T^G(\gamma_i;\delta_j) \cong Ind_{G_1 \times G_2}^G(C_1 \boxtimes C_2)$$
\end{lemma}
\begin{proof}
Suppose $\mu$ is a finite dimensional, irreducible representation of $G$, writing $\mu|_{G_1 \times G_2} = \oplus_k (\pi_1^k \boxtimes \pi_2^k)$ as the restricted representation to $G_1 \times G_2$, then
\begin{align*}
[Ind_{G_1 \times G_2}^G(C_1 \boxtimes C_2): \mu] &= [C_1 \boxtimes C_2 : \mu|_{G_1 \times G_2}]\\
&=\sum_k[\sum_i a_i Ind_{T_1}^{G_1}(\gamma_i) : \pi_1^k][\sum_j b_j Ind_{T_2}^{G_2}(\delta_j): \pi_2^k]\\
&=\sum_{i,j,k}a_ib_j[\gamma_i : \pi_1^k|_{T_1}][\delta_j : \pi_2^k|_{T_2}]\\
&=\sum_{i,j,k}a_ib_j[(\gamma_i;\delta_j) : (\pi_1^k \boxtimes \pi_2^k)|_{T_1 \times T_2}]\\
&=\sum_{i,j}a_ib_j[(\gamma_i;\delta_j) : \mu|_{T_1 \times T_2}]\\
&=\sum_{i,j}a_ib_j[Ind_T^G(\gamma_i;\delta_j):\mu]
\end{align*}
\end{proof}

\begin{proposition} \label{prop:thmb1}
Let $\mc{O}' = (2a_{2p}',\dots, 2a_1', 2a_0')$ be a nilpotent orbit with $a_{2i-1} > a_{2i-2}$ for all $i$. Then the multiplicities of the fundamental representations are given by
$$[R(\mc{O}'): \mu_i] = [Ind_{GL(D')}^G(\mathrm{triv}): \mu_i],$$
with $GL(D') = \Pi_{i=0}^p GL(a_{2i}' + a_{2i-1}')$ (Recall we take $a_{-1}' = 0$ in the Introduction).
\end{proposition}
\begin{remark} \emph{
The above Proposition essentially shows Theorem A holds for all $\mc{O}'$'s with $a_{2i-1}' > a_{2i-2}'$ for all $i$ - According to Theorem A, one needs to remove all column pairs of the same size. By the hypothesis of the above Proposition, a column pair can only exist when $(\alpha, \alpha) = (2a_{2i}', 2a_{2i-1}')$. Therefore, $GL(a_{2i}' + a_{2i-1}') = GL(2a_{2i}') = GL(\alpha)$ as in Theorem A.}
\end{remark}
\begin{proof}
We will prove the Theorem by induction on $p$. Note that $\mc{O}'$ satisfies the hypothesis in Theorem \ref{thm:b2008}, therefore $R(\mc{O}') \cong X_{\mathrm{triv}}$ as $G$-modules. For the rest of the proof, we will use the notation $X_{\pi, \mc{O}}$ to denote the unipotent representation $X_{\pi}$ attached to the orbit $\mc{O}$.\\

\noindent $\mathbf{p = 0}$: $\mc{O}_0 = (d_0')$. According to Algorithm \ref{alg:unipotent}, $R(\mc{O}_0) \cong X_{\mathrm{triv}, \mc{O}_0} \cong U_{a_0'}$. So the result follows from Lemma \ref{lem:uab}.\\

\noindent \textbf{Induction Step:} Suppose the hypothesis holds for $\mc{O}_r = (2a_{2r}',\dots,2a_1',2a_0')$ and $G = G_r = Sp(\sum_{i=0}^{2r}2a_{i}',\bb{C})$, i.e.
$$[R(\mc{O}_r):\mu_i] = [X_{\mathrm{triv},\mc{O}_r}:\mu_i] = [Ind_{GL(D_r')}^G(\mathrm{triv}): \mu_i],$$
where $GL(D_r') = \Pi_{i=0}^r GL(a_{2i}' + a_{2i-1}')$. Now study the orbit $\mc{O}_{r+1} = (2a,2b,2a_{2r}', \dots,d_1',d_0)$ and $G = G_{r+1}$. Algorithm \ref{alg:unipotent} gives
\begin{align*}
X_{\mathrm{triv}, \mc{O}_{r+1}} &= \frac{1}{2^{r+1}}\sum_{I \subset \{s_1', s_3', \dots s_{2r+1}' \}}R_I\\
&= \frac{1}{2^{r+1}}(\sum_{J \subset \{s_1', s_3', \dots s_{2r-1}' \}}R_J + \sum_{J \subset \{s_1', s_3', \dots s_{2r-1}' \}}R_{J\cup \{s_{2r+1}'\}})\\
&\cong \frac{1}{2^{r+1}}[\sum_{J \subset \{s_1', s_3', \dots s_{2r-1}' \}} \sum_{C_{a} \times D_{b} \times W_J}(-1)^{l(w)} Ind_T^G((\lambda_{r+1};\lambda_{\mc{O}_r}) - w(\lambda_{r+1};\lambda_{\mc{O}_r}))\\
&\ \ \ \ +\sum_{J \subset \{s_1', s_3', \dots s_{2r-1}' \}} \sum_{D_{a+1} \times C_{b-1} \times W_J}(-1)^{l(w)} Ind_T^G((\lambda_{r+1}'; \lambda_{\mc{O}_r}) - w( \lambda_{r+1}'; \lambda_{\mc{O}_r}))],
\end{align*}
with $\lambda_{r+1} = (a,\dots,2,1; b-1,\dots, 1, 0)$, $\lambda_{r+1}' = (a,\dots, 1,0; b-1,\dots,2,1)$ and $\lambda_{\mc{O}_r}$ is as in Step (1) of Algorithm \ref{alg:unipotent}. Now apply Lemma \ref{lem:decompose} with $G_1 = Sp(2a+2b)$ and $G_2 = G_r$, and
\begin{align*}
C_1 &= U_{a, b} = \frac{1}{2}[\sum_{C_{a} \times D_{b}}(-1)^{l(w)} Ind_{T_1}^{G_1}(
\lambda_{r+1} - w\lambda_{r+1}) + \sum_{D_{a+1} \times C_{b-1}}(-1)^{l(w)} Ind_{T_1}^{G_1}(\lambda_{r+1}' - w\lambda_{r+1}')];\\
C_2 &= \frac{1}{2^{r}}\sum_{J \subset \{s_1', s_3', \dots s_{2r-1}' \}} \sum_{W_J}(-1)^{l(w)} Ind_{T_2}^{G_2}(\lambda_{\mc{O}_r} - w\lambda_{\mc{O}_r}) \cong X_{\mathrm{triv}, \mc{O}_r} ,
\end{align*}
we will get $X_{\mathrm{triv}, \mc{O}_{r+1}} \cong Ind_{Sp(2a+2b) \times G_r}^G(U_{a,b} \boxtimes X_{\mathrm{triv},\mc{O}_r})$.\\

Now all fundamental representations $\mu_i$ in $G$ decomposes as $\mu_i|_{Sp(2a+2b) \times G_r} = \oplus_{(\gamma,\delta)} \mu_{\gamma}^1 \boxtimes \mu_{\delta}^2$, with all $\mu_{\gamma}^1$ and $\mu_{\delta}^2$ being fundamental representations of $G_1$ and $G_2$ respectively.
\begin{align*}
[R(\mc{O}_{r+1}): \mu_i] = [X_{\mathrm{triv}, \mc{O}_{r+1}} : \mu_i] &= [U_{a,b} \boxtimes X_{\mathrm{triv},\mc{O}_r} : \mu_i|_{Sp(2a+2b) \times G_r}]\\
&=\bigoplus_{(\gamma,\delta)} [U_{a,b} : \mu_{\gamma}^1][X_{\mathrm{triv}, \mc{O}_r} : \mu_{\delta}^2]\\
&=\bigoplus_{(\gamma,\delta)} [Ind_{GL(a+b)}^{Sp(2a+2b)}(\mathrm{triv}) : \mu_{\gamma}^1][Ind_{GL(D_r')}^{G_r}(\mathrm{triv}) : \mu_{\delta}^2]\\
&=[Ind_{GL(a+b)}^{Sp(2a+2b)}(\mathrm{triv})\boxtimes Ind_{GL(D_r')}^{G_r}(\mathrm{triv}) : \mu_i|_{Sp(2a+2b) \times G_r}]\\
&=[Ind_{GL(D_{r+1}')}^G(\mathrm{triv}) : \mu_i],
\end{align*}
where the third line comes from Lemma \ref{lem:uab} the induction hypothesis. So the proof is complete.
\end{proof}

The proof of Theorem A follows immediately from the above Proposition:
\begin{corollary} \label{cor:fundmult}
Let $\mc{O} = (2a_{2k}, \dots, 2a_1, 2a_0)$ be a nilpotent orbit for $G$. Remove all column pairs of same size $(\alpha_i, \alpha_i)$ in $\mc{O}$, leaving the orbit $(d_{2l},d_{2l-1},$ $\dots,d_0)$, i.e.
$$\mc{O} = (d_{2l},d_{2l-1},\dots,d_0) \vee (\alpha_1, \alpha_1, \dots, \alpha_x, \alpha_x)$$
with $d_{i+1} \neq d_i$ for all $i$. Then the multiplicities of the fundamental representations are given by
$$[R(\mc{O}):\mu_i] = [Ind_{GL(D)}^{Sp(2n,\bb{C})}(\mathrm{triv}):\mu_i],$$
where $GL(D) = \Pi_{i=0}^l GL(\frac{d_{2i}+d_{2i-1}}{2}) \times \Pi_{i=1}^x GL(\alpha_i)$.
\end{corollary}
\begin{proof}
We separate the columns $2a_{2i-1} = 2a_{2i-2}$ in $\mc{O}$ as in Remark \ref{rmk:quantization}, i.e. $\mc{O} = \mc{O}' \vee (\nu_1, \nu_1, \dots, \nu_y, \nu_y)$, where $\mc{O}'$ satisfies the hypothesis of \ref{prop:thmb1}. Taking $\pi = \mathrm{triv}$ in Remark \ref{rmk:quantization}, we get
$$R(\mc{O}) = Ind_{G' \times GL(\nu_1) \times \dots \times GL(\nu_y)}^G(R(\mc{O}') \boxtimes \mathrm{triv} \boxtimes \dots \boxtimes \mathrm{triv}).$$
By Proposition \ref{prop:thmb1}, $R(\mc{O}') = Ind_{GL(D')}^{G'}(\mathrm{triv})$, hence the result follows from induction in stages.
\end{proof}

As mentioned in the beginning of this Section, we present an example to compute the multiplicities of irreducible $G$-representations $V_{\beta}$ appearing in $R(\mc{O})$ other than the fundamental representations $V_{1^i0^{n-i}}$ here.

\begin{example} \emph{
Let $\mc{O} = (8,4)$. Then the character formula $X_{\mathrm{triv}} \cong R(\mc{O})$ can be expanded as:}
$$X_{\mathrm{triv}} = \frac{1}{2}[\sum_{C_4 \times D_2}(-1)^{l(w)} Ind_T^G(4321,10 -  w(4321,10)) + \sum_{D_5 \times C_1}(-1)^{l(w')} Ind_T^G(43210,1 -  w'(43210,1))].$$
\emph{To find the coefficient of $Ind_T^G(0^6)$ in the above expression, one needs to find out how many $w \in W(C_4 \times D_2)$ so that $(4321,10)-w(4321,10)$ can be $W$-conjugated to have weight $(0^6)$ (and respectively for $w' \in W(D_5 \times C_1)$). Obviously this forces $w = w' = Id$, and hence }
$$R(\mc{O}) \cong X_{\mathrm{triv}}|_{K_{\bb{C}}} = \frac{1}{2}(Ind_T^G(0^6) + Ind_T^G(0^6)) + \sum_{\lambda \in \mf{t}^*} c_{\lambda} Ind_T^G(\lambda) = Ind_T^G(0^6) + \sum_{\lambda \in \mf{t}^*} c_{\lambda} Ind_T^G(\lambda),\ ||\lambda|| > 0.$$
\emph{To find out the coefficients of $Ind_T^G(1^20^4)$, one needs to find out which $w \in W(C_4 \times D_2)$  so that $(4321,10)-w(4321,10)$ can be $W$-conjugated to have weight $(1^20^4)$ (and respectively for $w' \in W(D_5 \times C_1)$). The list of all such $w(4321,10)$ and $w'(43210,1)$ are given below:}\\
$$\begin{tabular}{|l|l|}
\hline
$w(4321,10)$ & $(4321,10) - w(4321,10)$  \\ \hline
\ \ \ 3421,10 & \ \ \ \ \ \ \ \ \ \ \ \ 1-10000\\
\ \ \ 4231,10 & \ \ \ \ \ \ \ \ \ \ \ \ 01-1000\\
\ \ \ 4312,10 & \ \ \ \ \ \ \ \ \ \ \ \ 001-100\\
\ \ \ 4321,01 & \ \ \ \ \ \ \ \ \ \ \ \ 00001-1\\
\ \ \ 4321,0-1& \ \ \ \ \ \ \ \ \ \ \ \ 000011 \\
\hline
\end{tabular}\ \ \ \ \ \
\begin{tabular}{|l|l|}
\hline
$w'(43210,1)$ & $(43210,1) - w'(43210,1)$  \\ \hline
\ \ \ 34210,1 & \ \ \ \ \ \ \ \ \ \ \ \ 1-10000\\
\ \ \ 42310,1 & \ \ \ \ \ \ \ \ \ \ \ \ 01-1000\\
\ \ \ 43120,1 & \ \ \ \ \ \ \ \ \ \ \ \ 001-100\\
\ \ \ 43201,1 & \ \ \ \ \ \ \ \ \ \ \ \ 0001-10\\
\ \ \ 4320-1,1& \ \ \ \ \ \ \ \ \ \ \ \ 000110 \\
\hline
\end{tabular}$$\\
\emph{Since each of the $w$ and $w'$ above is a simple reflection, i.e. $l(w) = l(w') = 1$, therefore $(-1)^{l(w)} = (-1)^{l(w')} = -1$ and}
$$R(\mc{O}) \cong Ind_T^G(0^6) + \frac{1}{2}[(-5) + (-5)]Ind_T^G(1^20^4) + \sum_{\lambda \in \mf{t}} c_{\lambda} Ind_T^G(\lambda),\ \ ||\lambda||^2 > 2$$
\emph{as virtual $G$-modules. Continuing the calculations, we get}
$$R(\mc{O}) \cong Ind_T^G(0^6) - 5Ind_T^G(1^20^4) + 6Ind_T^G(1^40^2) + 0Ind_T^G(2^10^5) - Ind_T^G(1^6) + \dots$$
\emph{For any irreducible $G$-representation $\mu$, Frobenius reciprocity gives}
$$[R(\mc{O}):\mu] = [(0^6) : \mu|_{T}] - 5[(1^20^4) : \mu|_{T}] +6[(1^40^2) : \mu|_{T}] + 0[(2^10^5) : \mu|_{T}] - [(1^6): \mu|_{T}] + \dots$$
\emph{so in practice this gives $[R(\mc{O}):\mu]$ for any $\mu$ - for example, if $\mu = V_{2^10^5}$, then Weyl character formula gives $\mu|_{T} = 6 \times (0^6) + 1 \times (1^20^4) + 1 \times (2^10^5) + \dots$, with the remaining terms lying outside the dominant Weyl chamber $C = \{(a_1, a_2, \dots, a_6) \in \mf{t}^*\ |\ a_1 \geq a_2 \geq \dots \geq 0 \}$. So }
$$[R(\mc{O}): V_{2^10^5}] = 1 \times 6 - 5 \times 1 + 6 \times 0 + 0 \times 1 - 1\times 0 = 1.$$
\emph{Indeed, since $\mc{O}$ is a spherical orbit, the multiplicity of $V_{2^10^5}$ in $R(\mc{O})$ is known (e.g. Section 2 of \cite{W 2016}) to be one.}
\end{example}

\section{Normality of Orbit Closures}
One of the reasons we are interested in computing the multiplicity of fundamental representations appearing in $R(\mc{O})$ is to detect non-normality of the orbit closure $\overline{\mc{O}}$. To do so, we will find an upper bound on $[R(\overline{\mc{O}}):\mu]$ for all fundamental representations $\mu$, and show that this upper bound is strictly smaller than $[R(\mc{O}):\mu]$ if $\overline{\mc{O}}$ is not normal.\\

The upper bound we need is given in the Lemma below:
\begin{lemma} \label{lem:osharp}
Let $G = Sp(2n,\bb{C})$ and $\mc{O} = (c_{2k},c_{2k-1},\dots, c_{0})$ be \textbf{any} nilpotent orbit. For \textbf{any} finite dimensional irreducible representations $\mu$,
$$[R(\overline{\mc{O}}):\mu] \leq [R(\mc{O}^{\sharp}):\mu]$$
where $\mc{O}^{\sharp} = (\frac{c_{2k}+c_{2k-1}}{2},\frac{c_{2k}+c_{2k-1}}{2}, \frac{c_{2k-2}+c_{2k-3}}{2},\frac{c_{2k-2}+c_{2k-3}}{2},\dots,\frac{c_{2}+c_{1}}{2},\frac{c_{2}+c_{1}}{2},\frac{c_0}{2},\frac{c_0}{2})$.
\end{lemma}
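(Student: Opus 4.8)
The plan is to reduce the inequality $[R[\overline{\mc{O}}]:\mu] \leq [R[\mc{O}^{\sharp}]:\mu]$ to a statement about the Barbasch model $X_{\mc{O}}$ and the associated-variety / degeneration order on nilpotent orbits. First, I would observe that $\mc{O}^{\sharp}$ is obtained from $\mc{O}$ by replacing each consecutive column pair $(c_{2i},c_{2i-1})$ with the pair $(z_i,z_i)$ where $z_i = \frac{c_{2i}+c_{2i-1}}{2}$, so $\mc{O}^{\sharp}$ dominates $\mc{O}$ in the closure order (column lengths are ``averaged'', which moves up in the dominance order on dual partitions), hence $\overline{\mc{O}} \subseteq \overline{\mc{O}^{\sharp}}$, giving $[R[\overline{\mc{O}^{\sharp}}]:\mu] \leq [R[\overline{\mc{O}}]:\mu]$ as a first sanity check on the direction. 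But we want the opposite-looking inequality with $\mc{O}^{\sharp}$ (not $\overline{\mc{O}^{\sharp}}$) on the larger side, so the key point must be that $R[\mc{O}^{\sharp}]$ is ``big enough'' in each $\mu$-isotypic component to dominate $R[\overline{\mc{O}}]$.

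The main idea is to use $R[\overline{\mc{O}}] \hookrightarrow R[\mc{O}] \cong X_{\mc{O}}$ (Corollary 2.2 and Theorem 3.2), so $[R[\overline{\mc{O}}]:\mu] \leq [X_{\mc{O}}:\mu]$, and then show $[X_{\mc{O}}:\mu] \leq [R[\mc{O}^{\sharp}]:\mu]$. For the second step I would unwind the induced-module description: by Definition 3.1 and Proposition 3.13, computing $[X_{\mc{O}}:\mu]$ amounts to Frobenius reciprocity for $Ind_{GL(Y)\times GL(Z,\bb{C})}^{Sp(2m,\bb{C})}(triv)$, while $\mc{O}^{\sharp}$ is precisely the orbit whose $Y$-multiset is empty and whose $Z$-multiset is $\{z_k,z_k,\dots\}$ — more usefully, $\mc{O}^{\sharp}$ is the orbit all of whose column pairs are equal, so its Barbasch model is itself an induced module from a product of $GL$'s of the sizes $z_i$ (each appearing because a same-size column pair is ``removed''), namely $Ind_{GL(z_k)\times\cdots\times GL(z_0)}^{Sp}(triv)$ tensored appropriately. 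Thus both $[X_{\mc{O}}:\mu]$ and $[R[\mc{O}^{\sharp}]:\mu]$ are multiplicities in modules induced from products of general linear groups with the \emph{same} total collection of block sizes; the difference is only in how the blocks are grouped (whether a block of size $z_i$ sits inside a $Sp(2z_i)$-factor via the $GL\subset Sp$ chain, or is one of the ``$Y$'' blocks). Induction in stages together with the branching $GL(a)\times GL(b)\subset GL(a+b)\subset Sp(\cdots)$ should show that regrouping blocks can only \emph{increase} multiplicities of a fixed $\mu$ — intuitively, $Ind_{GL(a+b)}^{H}(triv)$ is a summand-wise smaller object than $Ind_{GL(a)\times GL(b)}^{H}(triv)$ because $triv|_{GL(a)\times GL(b)}$ is a single constituent of $triv$ restricted further. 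I would make this precise by a sequence of one-step moves, each splitting one $GL$ factor into two, and checking the multiplicity of an arbitrary irreducible $\mu$ is monotone under each move.

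The hard part will be the monotonicity claim for \emph{arbitrary} irreducible $\mu$ (not just fundamental ones): the clean combinatorial formula of Theorem 2.3 only covers fundamental representations, so for general $\mu$ I cannot simply compare the explicit $\beta$-sequences. I expect to need a representation-theoretic comparison at the level of $(\mf{g},K)$-modules or of graded characters — e.g. exhibiting an injection of $Sp(2m,\bb{C})$-modules $X_{\mc{O}} \hookrightarrow R[\mc{O}^{\sharp}]$, or comparing Hilbert series degree-by-degree — rather than a formula-level argument. One promising route: $\mc{O}^{\sharp}$ and $\mc{O}$ have the same ``$Z$-data'' after the $Y$-columns of $\mc{O}$ are themselves viewed as equal pairs contributing their own $GL$-factors, so that $X_{\mc{O}^{\sharp}}$ and $X_{\mc{O}}$ are induced from conjugate-but-regrouped Levi subgroups, and a general principle (induction in stages plus positivity of branching from $GL(a)\times GL(b)$ to $GL(a+b)$) gives the inequality uniformly in $\mu$. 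Verifying that positivity — that for every irreducible $\mu$ of $Sp(2m,\bb{C})$ one has $[Ind_{GL(z_k)\times\cdots}^{Sp}(triv):\mu]\geq[Ind_{GL(Y)\times GL(Z)}^{Sp}(triv):\mu]$ after matching total data — is where the real work lies, and I would isolate it as a standalone lemma about parabolic induction from block-upper-triangular Levi subgroups of $Sp$.
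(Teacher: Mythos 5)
Your proposal goes wrong at the very start and the error propagates. You correctly observe that $\overline{\mc{O}} \subseteq \overline{\mc{O}^{\sharp}}$, but then you write the induced inequality on regular functions backwards: since $\overline{\mc{O}}$ is a closed $G$-stable subvariety of the affine variety $\overline{\mc{O}^{\sharp}}$, restriction gives a \emph{surjection} of $G$-modules $R[\overline{\mc{O}^{\sharp}}] \twoheadrightarrow R[\overline{\mc{O}}]$, hence $[R[\overline{\mc{O}}]:\mu] \leq [R[\overline{\mc{O}^{\sharp}}]:\mu]$ for every $\mu$ — not the other way around. This already has $\overline{\mc{O}}$ on the smaller side, which is exactly what the lemma wants. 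The single missing observation, which is the actual content of the paper's proof, is that $\overline{\mc{O}^{\sharp}}$ is \emph{normal}. Indeed, by construction $\mc{O}^{\sharp}$ has its columns arranged in equal pairs $(\frac{c_{2i}+c_{2i-1}}{2},\frac{c_{2i}+c_{2i-1}}{2})$ together with $c_0$, so no chain of the Kraft--Procesi forbidden shape (Theorem 1.6) can occur, and by Corollary 2.2 we get $R[\mc{O}^{\sharp}] = R[\overline{\mc{O}^{\sharp}}]$. Combining these two lines finishes the proof; no representation-theoretic machinery is needed.

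Because you misjudged the direction, you were pushed toward a much harder and in fact false route. Your plan is to show $[R[\overline{\mc{O}}]:\mu] \leq [X_{\mc{O}}:\mu] = [R[\mc{O}]:\mu]$ and then $[R[\mc{O}]:\mu] \leq [R[\mc{O}^{\sharp}]:\mu]$. The second inequality does not hold in general: in Example 4.2 of the paper, with $\mc{O} = (8,6,6,4,4,2,2)$ and $\mc{O}^{\sharp} = (7,7,5,5,3,3,2)$, one has $[R[\mc{O}]:\mu_{16}] = 3 > 2 = [R[\mc{O}^{\sharp}]:\mu_{16}]$. So the monotonicity-under-regrouping principle you hoped to isolate as a lemma is simply false, and any argument that tries to bound $R[\mc{O}]$ (rather than $R[\overline{\mc{O}}]$) by $R[\mc{O}^{\sharp}]$ cannot succeed. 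The point is precisely that when $\overline{\mc{O}}$ is not normal, $R[\mc{O}]$ is strictly larger than $R[\overline{\mc{O}}]$, and it is the latter, smaller ring that the lemma controls via the surjection from $R[\overline{\mc{O}^{\sharp}}]$.
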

\begin{proof}
We only work in the case when $G = Sp(2n,\bb{C})$. Note that by the Kraft-Procesi criterion (Theorem \ref{thm:KP}), $\overline{\mc{O}^{\sharp}}$ is normal. Therefore $R(\mc{O}^{\sharp}) = R(\overline{\mc{O}^{\sharp}})$. On the other hand, note that $\overline{\mc{O}^{\sharp}} \supset \overline{\mc{O}}$. Consequently, we have a $G$-module surjection
$$R(\mc{O}^{\sharp}) = R(\overline{\mc{O}^{\sharp}}) \twoheadrightarrow R(\overline{\mc{O}})$$
and hence $[R(\overline{\mc{O}}) : \mu] \leq [R(\overline{\mc{O}^{\sharp}}) : \mu]$ for any finite dimensional $G$-representations $\mu$. However, the latter term is equal to $[R(\mc{O}^{\sharp}):\mu]$. Hence the result follows.
\end{proof}
\noindent \textit{Proof of Theorem B.} One direction is easy - if $\overline{\mc{O}}$ is normal, then $R(\overline{\mc{O}}) = R(\mc{O})$ as $G$-modules, hence $[R(\overline{\mc{O}}) : \mu_i] = [R(\mc{O}):\mu_i]$ for all $i$.\\

Now suppose $\mc{O} = (2a_{2k},2a_{2k-1},\dots,2a_1,2a_0)$ be a nilpotent orbit such that $\overline{\mc{O}}$ is not normal. Then Theorem A says
$$[R(\mc{O}) : \mu_i] = [Ind_{GL(D)}^G (\mathrm{triv}) : \mu_i].$$
According to Lemma \ref{lem:osharp}, $\mc{O}^{\sharp} = (a_{2k}+a_{2k-1},a_{2k}+a_{2k-1},\dots,a_{2}+a_{1},a_2+a_1,a_0,a_0)$. Since there may be some odd columns appearing in $\mc{O}^{\sharp}$, we cannot use Theorem A directly to compute $[R(\mc{O}^{\sharp}):\mu_i]$. However, $\mc{O}^{\sharp} = Ind_{\mf{gl}(a_{2k}+a_{2k-1}) \oplus \dots \oplus \mf{gl}(a_2+a_1) \oplus \mf{gl}(a_0)}^{\mf{g}}(\mathrm{triv})$ is strongly Richardson, therefore
$$[R(\mc{O}^{\sharp}) : \mu_i] = [Ind_{GL(D^{\sharp})}^G (\mathrm{triv}) : \mu_i]$$
with $GL(D^{\sharp}) = GL(a_{2k}+a_{2k-1}) \times \dots \times GL(a_2+a_1) \times GL(a_0)$. By the Kraft-Procesi description of non-normal orbit closures (Theorem \ref{thm:KP}), the two parabolic subgroups $GL(D)$ and $GL(D^{\sharp})$ of $G$ are different. More precisely, define

$$F := \{i \in \bb{N}\ |\ GL(i)\text{ is a factor of }GL(D)\};\ \ F^{\sharp} := \{i \in \bb{N}\ |\ GL(i)\text{ is a factor of }GL(D^{\sharp})\}$$
with multiplicities. Rearrange the elements in $F$ and $F^{\sharp}$ so that
$$F = \{f_0 \leq  f_2 \leq  f_4 \dots\};\ \ F^{\sharp} = \{f_0^{\sharp} \leq f_2^{\sharp} \leq f_4^{\sharp} \dots\}.$$
Since $\mc{O}$ has non-normal closure, it has columns of the form
$$2a_{2i} > 2a_{2i-1} = 2a_{2i-2} = \dots = 2a_{2j - 1} = 2a_{2j-2} > 2a_{2j-3} $$
pick the smallest value of such $c_{2j-2} \neq c_{2j-3}$. Then
$$\{ a_0, a_2+a_1, \dots, a_{2j-4} + a_{2j-5} \} = \{f_0, f_2, \dots, f_{2j-4}\} =  \{f_0^{\sharp}, f_2^{\sharp}, \dots, f_{2j-4}^{\sharp}\}$$
while $f_{2j-2}^{\sharp} := a_{2j-2} + a_{2j-3} \in F^{\sharp}$ but not in $F$. More precisely, it is easy to see $f_{2j-2}^{\sharp} < f_{2j-2}$.\\
Let $f := \sum_{i = 0}^{j-1} f_{2i}^{\sharp}$. Using Frobenius reciprocity, one can check that
$$[R(\mc{O}^{\sharp}):\mu_{2f+2}] = [Ind_{\Pi GL(f_{2i}^{\sharp})}^G(\mathrm{triv}) : \mu_{2f+2}]< [Ind_{\Pi GL(f_{2i})}^G(\mathrm{triv}) : \mu_{2f+2}] = [R(\mc{O}):\mu_{2f+2}]$$
Now Lemma \ref{lem:osharp} gives $[R(\overline{\mc{O}}):\mu_i] \leq [R(\mc{O}^{\sharp}):\mu_i]$ for all $i$, and consequently the Theorem follows by taking $i = 2f+2$. \qed

\begin{example} \label{eg:7531} \emph{
Let $\mc{O} = (8,6,6,4,4,2,2)$ for $Sp(32,\mathbb{C})$. Following Theorem \ref{thm:KP}, its closure is not normal.}\\

\emph{Using the notations in the above proof, $F = \{2,4,4,6\}$. Now $\mc{O}^{\sharp} = (7,7,5,5,3,3,1,1)$, and $F^{\sharp} = \{1,3,5,7\}$. The first discrepancy between $F$ and $F^{\sharp}$ occurs at $1 \neq 2$. Hence $[R(\overline{\mc{O}}):\mu_i] < [R(\mc{O}):\mu_i]$ must occur at $i = 2(1) + 2 = 4$. Using Frobenius reciprocity, we computed the multiplicities as follows:
\small{\begin{center}
  \begin{tabular}{ | c || c | c | c | c | c | c | c | c | c | c | c | c | c | c | c | c | c |}
    \hline
    $i$ & 0 & 1 & 2 & 3 & 4 & 5 & 6 & 7 & 8 & 9 & 10 & 11 & 12 & 13 & 14 & 15 & 16 \\ \hline \hline
    $[R(\mc{O}^{\sharp}):\mu_i]$ & 1 & 0 & 3 & 0 & 5 & 0 & 7 & 0 & 8 & 0 & 8 & 0 & 7 & 0 & 5 & 0 & 2 \\ \hline
    $[R(\mc{O}):\mu_i] $ & 1 & 0 & 3 & 0 & 6 & 0 & 9 & 0 & 12 & 0 & 13 & 0 & 12 & 0 & 8 & 0 & 3 \\ \hline
  \end{tabular}
\end{center}}
\normalsize{The discrepancies of the two rows of numbers reflects the non-normality of $\overline{\mc{O}}$.}}\\
\end{example}
We would like to end with a conjecture:
\begin{conjecture}
Let $\mc{O}$ be a classical nilpotent orbit for $G$, and $\mu$ is any irreducible, finite dimensional representation of $G$. Then the multiplicities $[R(\overline{\mc{O}}):\mu]$ can be computed. In particular, if $\mu = \mu_i$ is a fundamental representation, then
$$[R(\overline{\mc{O}}):\mu] = [R(\mc{O}^{\sharp}) : \mu]$$
\end{conjecture}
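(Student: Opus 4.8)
Since $\overline{\mc{O}^{\sharp}}$ is normal, Corollary 2.2 gives $R[\mc{O}^{\sharp}]=R[\overline{\mc{O}^{\sharp}}]$, and the inclusion $\overline{\mc{O}}\subseteq\overline{\mc{O}^{\sharp}}$ yields a surjection $R[\mc{O}^{\sharp}]\twoheadrightarrow R[\overline{\mc{O}}]$; so the inequality $[R[\overline{\mc{O}}]:\mu]\le[R[\mc{O}^{\sharp}]:\mu]$ is exactly Lemma 4.1, and the whole content of the displayed identity is the reverse inequality $[R[\overline{\mc{O}}]:\mu]\ge[R[\mc{O}^{\sharp}]:\mu]$ for $\mu$ a fundamental representation (together with, for the first assertion, an effective formula for arbitrary $\mu$). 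The plan is to supply the missing lower bound geometrically. First I would interpolate a chain of nilpotent orbit closures
$$\overline{\mc{O}}=\overline{\mc{O}_{0}}\subsetneq\overline{\mc{O}_{1}}\subsetneq\cdots\subsetneq\overline{\mc{O}_{N}}=\overline{\mc{O}^{\sharp}}$$
in which each step is a minimal degeneration, and apply the row- and column-removal reductions of Kraft--Procesi so that the transverse slice to $\overline{\mc{O}_{t-1}}$ in $\overline{\mc{O}_{t}}$ is, up to a smooth factor, a minimal nilpotent singularity in a classical group of small rank. Whenever that slice is normal, Lemma 4.1 together with Corollary 2.2 already forces the fundamental multiplicities not to move across the step, so only the finitely many \emph{non-normal} minimal degenerations — the ones detected by the bad chains of Theorem 1.6 — require work.

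For such a step, the key local computation is to model a neighbourhood of the relevant smaller orbit $(c_{2k},\dots,c_{2i-1}+2,\dots,c_{2j-2}-2,\dots,c_0)$ inside $\overline{\mc{O}_{t}}$ as $S\times(\mathrm{smooth})$, where $S$ is the explicitly known non-normal transverse slice, to write $R[S]$ and its normalization $R[\widetilde{S}]$ via classical invariant theory (a determinantal-type coordinate ring and its integral closure), and to check that the inclusion $R[S]\hookrightarrow R[\widetilde{S}]$ — which fails to be surjective precisely on the $GL$-isotypic parts that register non-normality in Corollary 2.5 — is nevertheless an isomorphism on every $\mu_i$-isotypic component. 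Transporting this through induction in stages, exactly as in the proof of Proposition 3.13, would then upgrade the local statement to the global identity $[R[\overline{\mc{O}}]:\mu_i]=[R[\mc{O}^{\sharp}]:\mu_i]$.

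For an arbitrary irreducible $\mu$ there is no longer anything to compare against, so one genuinely needs a character. The natural strategy is to mimic Section 3: construct a $(\mf{g}_{\bb{C}},K_{\bb{C}})$-module $X_{\overline{\mc{O}}}$ with $X_{\overline{\mc{O}}}\cong R[\overline{\mc{O}}]$ as $G$-modules, obtained from Barbasch's $X_{\mc{O}}$ of Definition 3.1 by subtracting exactly the piece that accounts for the normality defect, and push its character through the Frobenius-reciprocity bookkeeping of Lemma 3.11 and Proposition 3.13. \emph{The main obstacle is this last construction.} Barbasch's model is adapted to the full orbit $\mc{O}$, and building (and proving correct) an analogue for the possibly non-normal $\overline{\mc{O}}$ is equivalent to describing which $G$-types occur in the ideal $I(\overline{\mc{O}}\subset\overline{\mc{O}^{\sharp}})$ — in particular showing it contains no fundamental representation — which is exactly the information not controlled by the tools developed here. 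Even the weaker fundamental-representation statement rests on the local slice computation above going through cleanly, and that is the step I expect to be delicate.
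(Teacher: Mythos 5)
This statement is Conjecture~5.1; the paper offers no proof of it, only the remark that ``the author knows of no counter-example.'' So there is no paper proof against which to compare your attempt, and any purported proof should be scrutinized with particular care.

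Your write-up is candid about this: it is not a proof but a research sketch. You correctly reduce the displayed identity to the reverse of Lemma~4.1 (i.e.\ $[R[\overline{\mc{O}}]:\mu]\ge[R[\mc{O}^{\sharp}]:\mu]$ for fundamental $\mu$), and you correctly identify that the real content is showing the ideal $I(\overline{\mc{O}}\subset\overline{\mc{O}^{\sharp}})$ contains no fundamental $G$-type. The proposed route through Kraft--Procesi minimal degenerations and transverse slices is plausible and in the spirit of the literature the paper cites. But the pivotal claims --- that in each non-normal minimal degeneration the map $R[S]\hookrightarrow R[\widetilde S]$ is an isomorphism on every $\mu_i$-isotypic piece, and that this local statement globalizes through induction in stages to control the actual coordinate ring $R[\overline{\mc{O}}]$ (not just virtual characters) --- are asserted, not proved. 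In particular, Proposition~3.13 and the surrounding character-formula machinery manipulate $X_{\mc{Q}}$ as a virtual character; passing from a slice-level isomorphism on isotypic components to a genuine $G$-module statement about $R[\overline{\mc{O}}]$ is exactly where the paper's own Lemma~4.1 stops at an inequality. You acknowledge this yourself in the final paragraph, so your proposal is an honest map of the difficulty rather than a closed argument. It should be labeled a strategy, not a proof; as it stands it does not establish the conjecture, and nothing in the paper does either.
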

The proof of the above Conjecture for $\mc{O} = (2a_{2k}, \dots, 2a_1, 2a_0)$ is the content of an on-going work of Barbasch and the author in \cite{BW}.

\end{document}